
\documentclass[12pt,a4paper]{article}
\usepackage{amsfonts,amssymb,amsmath,graphicx,times}
\usepackage[english]{babel}
\usepackage{bm}


\textwidth16.5cm \textheight23cm \topmargin-0.2cm \oddsidemargin-0.2cm

  \newtheorem{defi}{Definition}[section]
  \newtheorem{lem}[defi]{Lemma}
  
  \newtheorem{thm}[defi]{Theorem}
  \newtheorem{cor}[defi]{Corollary}

\newenvironment{nrtxt}{\begin{trivlist}\refstepcounter{defi}\item\textbf{\arabic{section}.\arabic{defi}~}}
{\end{trivlist}}

\newcounter{abbildung} 

\newcommand{\Matrixfeld}[4]{\left#1\!\begin{array}{*{#3}{c}}#4\end{array}\!\right#2}

\newcommand{\Mat}{\Matrixfeld()}

\newenvironment{proof}
    {\begin{trivlist} \item {\emph{Proof}.}} 
    {\hfill$\square$\end{trivlist}} 

\DeclareMathOperator{\Char}{{\mathrm{Char}}}

\newcommand{\GF}{{\mathrm{GF}}}
\newcommand{\GL}{{\mathrm{GL}}}

\newcommand{\T}{{\mathrm{T}}}

\newcommand{\cL}{{\mathcal L}}
\newcommand{\cN}{{\mathcal N}}
\newcommand{\cO}{{\mathcal O}}

\newcommand{\cR}{{\mathcal R}}

\newcommand{\cV}{{\mathcal V}}

\newcommand{\PP}{{\mathbb P}}

\newcommand{\RR}{{\mathbb R}}

\newcommand{\bp}{{\bm p}}
\newcommand{\bq}{{\bm q}}
\newcommand{\bv}{{\bm v}}
\newcommand{\bw}{{\bm w}}

\newcommand{\by}{{\bm y}}

\newcommand{\bX}{{\bm X}}
\newcommand{\bY}{{\bm Y}}

\begin{document}
\sloppy

\title{The Betten-Walker spread\\ and Cayley's ruled cubic surface}
\author{Hans Havlicek \and Rolf Riesinger}
\date{\normalsize\emph{Unserem lieben Freund Dieter Betten zum
65.\ Geburtstag gewidmet}}

\maketitle

\begin{abstract}
We establish that, over certain ground fields, the set of osculating tangents
of Cayley's ruled cubic surface gives rise to a (maximal partial) spread which
is also a dual (maximal partial) spread. It is precisely the Betten-Walker
spreads that allow for this construction. Every infinite Betten-Walker spread
is not an algebraic set of lines, but it turns into such a set by adding just
one pencil of lines.

\par\noindent
2000 Mathematics Subject Classification. 51A40, 51M30, 14J26.

\par\noindent
Key words: Cayley's ruled cubic surface, osculating tangents, maximal partial
spread, maximal partial dual spread, algebraic set of lines.
\end{abstract}

\section{Introduction}

\begin{nrtxt}
In this paper we deal with a spread which was discovered independently, and
approximately at the same time, by D.~Betten \cite{betten-73a} and M.~Walker
\cite{walk-76a}.

\par

Betten used the concept of a \emph{transversal homeomorphism\/} in order to
describe and classify topological translation planes in terms of partitions of
the vector space $\RR^4$ into $2$-dimensional subspaces. What we call the
\emph{Betten-Walker spread\/} (BW-spread) is described in
\cite[Satz~3]{betten-73a}. Betten's paper contains also a short remark that the
construction of this spread works also for finite fields of characteristic
$\neq 3$ without a primitive third root of unity
\cite[pp.~338--339]{betten-73a}. We refer to \cite{knarr-95} or
\cite{salz+al-96} for the connection between spreads and translation planes; it
is due to J.~Andr{\'e} and was found independently by R.~H.~Bruck and R.~C.~Bose.

 \par
Walker adopted the projective point of view, which leads to spreads of lines in
a projective $3$-space. He focussed on the case of a finite ground field
$\GF(q)$, $q\equiv -1 \pmod 6$, and on the \emph{reguli\/} contained in the
spread. Thereby he laid the cornerstone for a concept which is now called the
\emph{Thas-Walker construction}. It links spreads with \emph{flocks} of
quadrics via the Klein mapping.

The BW-spread corresponds to a flock of a quadratic cone. In the finite case
this flock is due to C.~Fisher and J.~A.~Thas, who weakened Walker's condition
$q\equiv -1 \pmod 6$. We refer to \cite[pp.~334--338]{thas-95a} for further
details. Some authors use the term ``FTW-spread'' for a finite BW-spread.
 \par
The BW-spread and its corresponding translation plane were revisited by
A.~G.~Spera \cite{spera-86a}. The comprehensive paper by V.~Jha and
N.~L.~Johnson \cite{jha+j96a} (which should be read together with its second
part \cite{jha+j02}) contains more information about the BW-spread and its
associated flock. In both papers the existence of the BW-spread is established
for an arbitrary ground field $K$ with characteristic $\neq 3$ subject to the
condition that each element of $K$ has precisely one third root in $K$. We add
in passing that the BW-spread is among the ``likeable structures'' of
W.~M.~Kantor; see \cite{gevaert+j-88a}.
 \par
Finally, there is a neat connection, found by J.~A.~Thas, between flocks of
quadratic cones over finite fields and certain \emph{generalized quadrangles};
see \cite[p.~334]{thas-95a}, \cite{thas-01a}, and the references given there.
The infinite case was treated by F.~De~Clerck and H.~Van~Maldeghem
\cite{decl+vm-94a}. However, this connection with generalized quadrangles is
beyond the scope of the present paper. Let us just add the following remark: In
the finite case, the BW-spread corresponds to a generalized quadrangle
discovered by W.~M.~Kantor; cf.\ \cite[p.~398]{thas-95b}. Thus some authors
speak of the ``FTWKB generalized quadrangle'' in order to bring together the
names of all the involved mathematicians; see, for example,
\cite[p.~222]{okeefe+p-01a}.
\end{nrtxt}

\begin{nrtxt}
One aim of the present note is to present a short, direct, and self-contained
approach to the BW-spread, thereby establishing a connection with an algebraic
surface which was discovered already in the 19th century, namely \emph{Cayley's
ruled cubic surface}. According to \cite[p.~181]{mueller+k3-31} this name is
not completely appropriate, since M.~Chasles published his discovery of that
surface in 1861, three years before A.~Cayley.
\par
Our starting point is a Cayley surface $F$, say, in the projective $3$-space
over an arbitrary field $K$. At each simple point of $F$ there is a unique
osculating tangent other than a generator. The set of all those osculating
tangents, together with one particular line on $F$, gives then a set of lines,
say $\cO$, which easily turns out to be a spread if $K$ satisfies the
conditions mentioned above (characteristic $\neq3$, each element of $K$ has
precisely one third root in $K$). Moreover, when ``precisely'' is replaced with
``at most'' then $\cO$ is a maximal partial spread; see
Theorem~\ref{thm:S-eigenschaften} and cf.\ \cite[Theorem~6.3]{jha+j96a}. By our
approach the maximality of such a partial spread follows from the observation
that all points of a distinguished plane are incident with a line of the
partial spread.

By a classical result, there exists a duality which maps the Cayley surface (as
a set of points) onto the set of its tangent planes. Any mapping of this kind
fixes $\cO$, as a set of lines. Therefore, all our results hold together with
their dual counterparts. So, depending on the ground field, $\cO$ will be a
(maximal partial) spread and at the same time a (maximal partial) dual spread.

In case of characteristic three there is a \emph{line of nuclei} for the Cayley
surface. The existence of that line was noted by M.~de~Finis and
M.~J.~de~Resmini \cite{definis+d-83} without giving a geometric interpretation.
We show in Theorem~\ref{thm:par.netz} that the line of nuclei is the axis of a
parabolic congruence which contains all lines of $\cO$.
\end{nrtxt}

\begin{nrtxt}
The transversal homeomorphism used in \cite{betten-73a} to describe the
BW-spread is given in terms of polynomial functions. This raises the question
whether or not the BW-spread is \emph{algebraic}, i.e., its image under the
Klein mapping is an algebraic variety. In the finite case every set of points
is an algebraic variety (by \cite[Lemma~3.5~(a)]{hirs+s-94a}, even a
hypersurface), whence we exclude that case from our investigation. On the
other hand, infinite algebraic spreads seem to be rare. The only examples known
to the authors are the regular spreads (or, in other words, the elliptic linear
congruences) and some spreads found by the second author; see
\cite[Table~1]{ries-05a}. Unfortunately, our hope to find another example of an
algebraic spread did not come true. However, the BW-spread is very close to
being algebraic. We establish in Theorem~\ref{thm:algebraisch} that the union
of the BW-spread and one pencil of lines is algebraic. More precisely, the
Klein image of that set is the smallest algebraic variety containing the Klein
image of the BW-spread (Theorem~\ref{thm:abgeschlossen}). When looking for
equations describing that variety (in terms of Pl{\"u}cker coordinates) the thesis
of R.~Koch \cite{koch-68} turned out extremely useful, even though we could not
directly implement his results in our work. It is worth mentioning that the
BW-spread (over $\RR$) is ubiquitous in Koch's thesis under the German name
``Schmiegtangentenkongruenz'' (congruence of osculating tangents), but the
property of being a spread never seems to be mentioned in the text. Likewise,
the authors were unable to find a remark on this property in the older
literature on the Cayley surface.
\end{nrtxt}

\section{The Cayley surface}\label{se:cayley}

\begin{nrtxt}
We consider the three-dimensional projective space $\PP_3(K)$ over a
commutative field $K$. As we shall use column vectors, a point has the form $K
\bp$ with $(0,0,0,0)^{\T} \ne \bp = (p_0,p_1,p_2,p_3)^{\T} \in K^{4 \times 1}$.
The set of lines of $\PP_3(K)$ is written as $\cL$.

\par

Let $\bX:=(X_0,X_1,X_2,X_3)$ be a family of independent indeterminates over
$K$. We refer to \cite[pp.~48--51]{hirs-98} for those basic notions of
algebraic geometry which will be used in this paper. However, in contrast to
\cite{hirs-98}, we write
\begin{equation*}
 \cV\big(g_1(\bX),g_2(\bX),\ldots,g_r(\bX)\big)
 :=\big\{K\bp\in\PP_3(K)\mid g_1(\bp)=g_2(\bp)=\cdots=g_r(\bp)=0\big\}
\end{equation*}
for the set of $K$-rational points of the variety given by homogeneous
polynomials $g_1(\bX),\; g_2(\bX),\; \ldots,\; g_r(\bX) \in K[\bX]$.

\par

Each matrix $M=(m_{ij})_{0\leq i,j\leq 3}\in\GL_4(K)$ acts on the column space
$K^{4 \times 1}$ by multiplication from the left hand side and therefore as a
projective collineation on $\PP_3(K)$. Moreover, $M$ acts as a $K$-algebra
isomorphism on $K[\bX]$ via $X_i \mapsto \sum_{j=0}^{3} m_{ij}X_j$ for
$i\in\{0,1,2,3\}$. Given a form $g(\bX)\in K[\bX]$ and its image under $M$, say
$h(\bX)$, the collineation induced by $M$ takes $\cV\big(h(\bX)\big)$ to
$\cV\big(g(\bX)\big)$.

\par

In what follows the plane $\omega:=\cV(X_0)$ will be considered as \emph{plane
at infinity}; thus we turn $\PP_3(K)$ into a projectively closed affine space.
\end{nrtxt}

\begin{nrtxt}
We refer to \cite{bertini-23}, \cite{brau-64}, \cite{brau-67c},
\cite{gmai+h-04z}, \cite{rosati-56}, and \cite{sauer-37} for the definition and
basic properties of \emph{Cayley's ruled cubic surface\/} or, for short, the
\emph{Cayley surface}. It is, to within projective collineations, the point set
$F:=\cV\big(f(\bX)\big)$, where
\begin{equation*}
 f(\bX):=X_0X_1X_2-X_1^3-X_0^2X_3 \in
 K[\bX].
\end{equation*}
Let $\partial_i:=\frac{\partial}{\partial X_i}$. Hence we obtain
\begin{equation}\label{eq:partial}
\begin{array}{rcl@{\quad}rcl}
    \partial_0 f(\bX) &=& X_1X_2-2X_0X_3,&
    \partial_1 f(\bX) &=& X_0X_2-3X_1^2,\\
    \partial_2 f(\bX) &=& X_0X_1,&
    \partial_3 f(\bX) &=& -X_0^2.
\end{array}
\end{equation}
These partial derivatives vanish simultaneously at $(p_0,p_1,p_2,p_3)^{\T} \in
K^{4 \times 1}$ if, and only if, at least one of the following conditions
holds:
\begin{eqnarray}
    \label{eq:sing-punkte}
    & p_0=p_1=0; &\\
    \label{eq:knotengerade}
    & p_0=p_2=0 \mbox{ and } \Char K=3. &
\end{eqnarray}
The parametrization
\begin{equation*}
 K^2\to \PP_3(K): (u_1,u_2) \mapsto K(1,u_1,u_2,u_1u_2-u_1^3)^{\T}=:P(u_1,u_2)
\end{equation*}
is injective, and its image coincides with $F\setminus\omega$ (the affine part
of $F$). According to (\ref{eq:partial}), all points of $F\setminus \omega$ are
\emph{simple}. The \emph{tangent plane\/} at $P(u_1,u_2)$ equals
\begin{equation}\label{eq:tangebene}
   \cV\left( (2u_1^3-u_1u_2)X_0 + (-3u_1^2+u_2)X_1 + u_1X_2 - X_3\right).
\end{equation}
The points subject to (\ref{eq:sing-punkte}) comprise the line
$\cV(X_0,X_1)=F\cap\omega=:g_{\infty}$. They are easily seen to be \emph{double
points\/} of $F$. The \emph{tangent cone\/} (or \emph{tangent space\/}
\cite[p.~49]{hirs-98}) at a point $U:=K(0,0,s_2,s_3)^\T$, $(s_2,s_3)\neq
(0,0)$, is
\begin{equation}\label{eq:tangential-unendlich}
  \cV\big(X_0(s_2X_1-s_3X_0)\big);
\end{equation}
this is either a pair of distinct planes (if $U\neq Z:=K(0,0,0,1)^\T$) or a
repeated plane (if $U=Z$). We call each of these planes a \emph {tangent
plane\/} at $U$. The point $Z$ is a so-called \emph{pinch point\/}
\cite[p.~76]{mueller+k3-31}, and its tangent plane is $\omega$. See
Figure~\ref{fig:cf-schlaufen} which displays the Cayley surface in $\PP_3(\RR)$
in an affine neighbourhood of $Z$. (The plane $\cV(X_3)$ is at infinity in this
illustration.)

\par
The tangent plane of $F$ at $P(0,0)$ is $\cV(X_3)$; this plane meets $F$ along
the line $\cV(X_1,X_3)$ and the parabola
\begin{equation}\label{eq:leitkegelschnitt}
    l:=\cV(X_0X_2-X_1^2,X_3).
\end{equation}
For each $(s_0,s_1)\in K^2\setminus\{(0,0)\}$ the line
\begin{equation*}
 g(s_0,s_1):=K(s_0^2,s_0s_1,s_1^2,0)^{\T} + K(0,0,s_0,s_1)^{\T}
\end{equation*}
is a \emph{generator\/} of $F$. There are no other lines on $F$. The line
$g(0,1)=g_{\infty}$ is not only a generator of $F$, but also a
\emph{directrix}, as it has non-empty intersection with every generator. Each
point of $g_{\infty}$, except the point $Z$, is on precisely two generators of
$F$; each affine point of $F$ is incident with precisely one generator
(Figure~\ref{fig:cf-erz}).
\begin{center}
\begin{minipage}[t]{6.0cm}
\unitlength1.0cm\small
\begin{picture}(6,2.66)
 \put(0.0,0.0){\includegraphics[width=6\unitlength]{./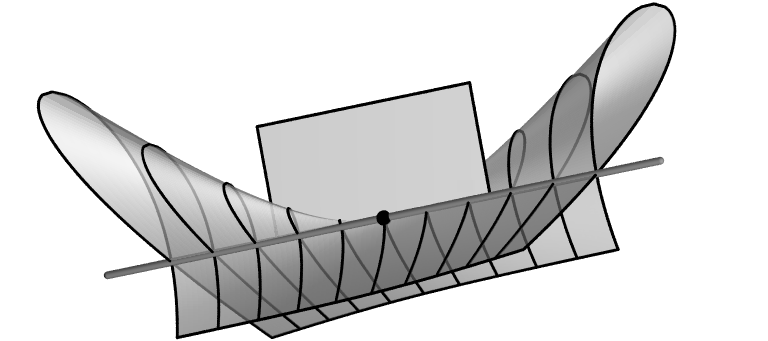}}
 \put(2.85,1.1){$Z$}
 \put(5.3,1.3){$g_\infty$}
 \put(2.15,1.45){$\omega$}
 \put(0.,2.0){$F$}
\end{picture}
\refstepcounter{abbildung}\label{fig:cf-schlaufen}
  \centerline{Figure \ref{fig:cf-schlaufen}}
\end{minipage}
\begin{minipage}[t]{6.0cm}
\unitlength1.0cm\small
\begin{picture}(6,2.66)
 \put(0.0,0.0){\includegraphics[width=6\unitlength]{./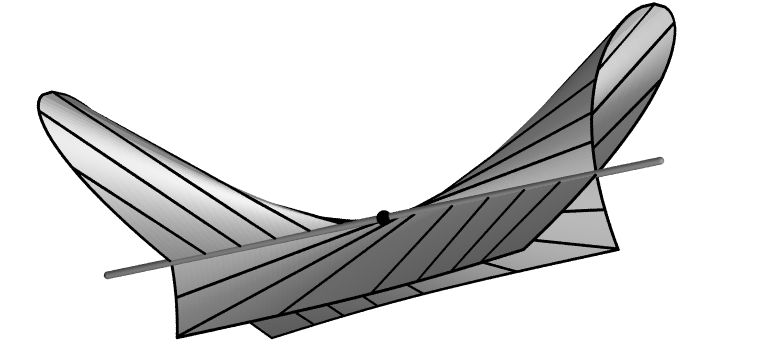}}
 \put(2.85,1.1){$Z$}
 \put(5.3,1.3){$g_\infty$}
 \put(0.,2.0){$F$}
\end{picture}
\refstepcounter{abbildung}\label{fig:cf-erz}
  \centerline{Figure \ref{fig:cf-erz}}
\end{minipage}
\end{center}

Next we describe the automorphic projective collineations of $F$: The set of
all matrices
\begin{equation*}
  M_{a,b,c}:= \Mat4{
  1&0&0&0\\
  a&c&0&0\\
  b&3\,ac&{c}^{2}&0\\
  ab-{a}^{3}&bc&a{c}^{2}&{c}^{3}
  }
\end{equation*}
where $a,b \in K$ and $c \in K\setminus\{0\}$ is a group, say $G$, under
multiplication. Each matrix in $G$ leaves invariant the cubic form $f(\bX)=
X_0X_1X_2-X_1^3-X^{2}_0X_3$ to within the factor $c^3$. Consequently, the group
$G$ acts on $F$ as a group of projective collineations. Under the action of
$G$, the points of $F$ fall into three orbits: $F\setminus\omega$,
$g_\infty\setminus\{Z\}$, and $\{Z \}$. Except for the case when $|K|\leq 3$,
the group $G$ yields all projective collineations of $F$; see
\cite[Section~3]{gmai+h-04z}.

Observe that the following holds irrespective of the characteristic of $K$.
\end{nrtxt}

\begin{lem}\label{lem:dualitaet}
There exists a duality which maps the set of points of the Cayley surface $F$
onto the set of its tangent planes. Thus the set of all tangent planes of $F$
is a Cayley surface in the dual projective space.
\end{lem}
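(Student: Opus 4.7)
My plan is to exhibit an explicit invertible matrix $A\in\GL_4(K)$ such that the duality $\delta$ defined by $K\bp\mapsto K(A\bp)$ (with $A\bp$ interpreted as the coordinate vector of a plane) sends the point set $F$ bijectively onto the set of all tangent planes of $F$. Once this is done the second assertion of the lemma is automatic: $\delta$ is a projective collineation from $\PP_3(K)$ onto the dual projective space, so the image of a Cayley surface under $\delta$ is again a Cayley surface.

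I would start from the tangent-plane data in (\ref{eq:tangebene}) and (\ref{eq:tangential-unendlich}): the tangent plane at a simple point $P(u_1,u_2)$ has coordinates $(2u_1^3-u_1u_2,\,-3u_1^2+u_2,\,u_1,\,-1)$, while the tangent planes at a double point $K(0,0,s_2,s_3)^\T$ are $\omega=\cV(X_0)$ and $\cV(s_2X_1-s_3X_0)$. The obvious polarity-like ansatz $\delta(P)=T_P F$ already fails: comparing coefficients in $K[u_1,u_2]$ forces $3=0$ in $K$ (owing to the $-3u_1^2$ term) and even then yields a singular matrix, so no correlation can send each point to its own tangent plane. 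The remedy is to precompose with a bijection of $F$: imposing instead $\delta(P(u_1,u_2))=T_{P(u_1,\,3u_1^2-u_2)}F$ removes the obstructive $u_1^2$ term, and the resulting linear system has, up to a non-zero scalar, the unique skew-symmetric solution
\begin{equation*}
A=\Mat4{0&0&0&1\\0&0&-1&0\\0&1&0&0\\-1&0&0&0},\qquad \det A=1.
\end{equation*}

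With $A$ in hand the verification is short and characteristic-free. A direct multiplication gives $A\cdot(1,u_1,u_2,u_1u_2-u_1^3)^\T=(u_1u_2-u_1^3,\,-u_2,\,u_1,\,-1)^\T$, which by (\ref{eq:tangebene}) are exactly the coordinates of the tangent plane at $P(u_1,3u_1^2-u_2)$; since the map $(u_1,u_2)\mapsto(u_1,3u_1^2-u_2)$ is an involution of $K^2$, every tangent plane at a simple point is hit exactly once. A parallel check shows that $\delta\bigl(K(0,0,s_2,s_3)^\T\bigr)=\cV(s_3X_0-s_2X_1)$, so as $(s_2,s_3)$ ranges over $K^2\setminus\{(0,0)\}$ one obtains precisely the pencil of planes through $g_\infty$, that is, $\omega$ together with the remaining tangent planes. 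The main obstacle is recognising that the natural ``polarity'' map $P\mapsto T_P F$ cannot come from a duality when $\Char K\neq 3$; once the involutive reparametrization $(u_1,u_2)\mapsto(u_1,3u_1^2-u_2)$ is identified, solving for $A$ and verifying all its properties is entirely mechanical.
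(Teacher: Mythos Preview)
Your argument is correct and lands on essentially the same duality as the paper (your matrix differs from the paper's only by the sign matrix $\diag(1,-1,1,-1)=M_{0,0,-1}\in G$, an automorphism of $F$), but the route is different. The paper does not solve for $A$ via parametric matching; instead it first reads off from (\ref{eq:tangebene}) and (\ref{eq:tangential-unendlich}) that a plane $\cV(\sum a_iX_i)$ is tangent to $F$ if and only if $a_1a_2a_3-a_2^3-a_0a_3^2=0$, i.e.\ the dual equation is the Cayley cubic with the index reversal $0\leftrightarrow 3$, $1\leftrightarrow 2$. The duality $(x_0,x_1,x_2,x_3)^\T\mapsto(x_3,x_2,x_1,x_0)$ is then immediate, and the reparametrization you discovered appears only afterwards as a remark (the paper notes $P(u_1,u_2)\mapsto T_{P(-u_1,\,3u_1^2-u_2)}F$). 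What the paper's approach buys is brevity and a conceptual explanation of \emph{why} such a duality exists; what your approach buys is that it finds the duality without first having to guess and verify the dual equation, and it makes explicit the obstruction to a genuine polarity $P\mapsto T_PF$ in characteristic $\neq 3$.
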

\begin{proof}
By (\ref{eq:tangebene}) and (\ref{eq:tangential-unendlich}), a plane
$\cV\left(\sum_{i=0}^{3}a_iX_i\right)$, where $a_i\in K$, is a tangent plane of
$F$ if, and only if, $a_1a_2a_3 - a_2^3 - a_0a_3^2 =0$. Consequently, the
linear bijection
\begin{equation*}
  K^{4\times 1}\to K^{1\times 4} : (x_0,x_1,x_2,x_3)^\T \mapsto
                                   (x_3,x_2,x_1,x_0)
\end{equation*}
gives a duality of $\PP_3(K)$ with the required properties.
\end{proof}
We note that the duality from the above takes, for all $(u_1,u_2)\in K^2$, the
point $P(u_1,u_2)$ to the tangent plane at the point $P(-u_1,3u_1^2-u_2)$.

\section{Osculating tangents and the Betten-Walker spread}\label{se:osk}

\begin{nrtxt}
If a line $t$ meets $F$ at a simple point $P$ with multiplicity $\geq 3$ then
it is called an \emph{osculating tangent\/} at $P$. Such a tangent line is
either a generator or it meets $F$ at $P$ only. In the latter case it will be
called a \emph{proper osculating tangent\/} of $F$. Observe that we are not
dealing with those lines which meet $F$ with multiplicity $\geq 3$ at a double
point. In fact, $g_\infty$ is the set of double points, and at any point $U\in
g_\infty$, the lines meeting $F$ at $U$ with multiplicity $\geq 3$ comprise two
pencils (one pencil if $U=Z$) lying in the two tangent planes at $U$ (only
tangent plane at $U=Z$); see formula (\ref{eq:tangential-unendlich}). The
following is part of the folklore:
\end{nrtxt}

\begin{lem}\label{lem:schmiegtangfernpunkt}
At each point $P(u_1,u_2) \in F \setminus g_{\infty}$ there is a unique proper
osculating tangent, namely the line which joins $P(u_1,u_2)$ with the point
$K(0,1,3u_1,u_2)^\T$.
\end{lem}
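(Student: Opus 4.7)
The plan is to exploit the transitive action of the group $G$ on $F\setminus\omega$: since each $M_{a,b,c}\in G$ multiplies the cubic form $f(\bX)$ by the nonzero scalar $c^3$, the induced collineation maps $F$ to $F$ and preserves intersection multiplicities of lines with $F$; in particular it carries osculating tangents to osculating tangents and generators to generators. Because $M_{u_1,u_2,1}$ sends $P(0,0)=K(1,0,0,0)^\T$ to $P(u_1,u_2)$, it will be enough to classify the proper osculating tangents at the single base point $P(0,0)$ and then transport the answer by $M_{u_1,u_2,1}$.

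At $P(0,0)$ I would parametrize an arbitrary line through $\bp=(1,0,0,0)^\T$ by $\bp+t\bv$, with $\bv=(v_0,v_1,v_2,v_3)^\T\notin K\bp$, and expand $f(\bp+t\bv)$ in $t$. Only the summands $X_0X_1X_2$ and $X_0^2X_3$ of $f$ contribute at low order, so the coefficient of $t$ comes out as $-v_3$, and the coefficient of $t^2$, after imposing $v_3=0$, collapses to the pure product $v_1v_2$. Thus the tangent condition recovers the tangent plane $\cV(X_3)$, in agreement with (\ref{eq:tangebene}) at $u_1=u_2=0$, and the osculating condition splits cleanly into the two cases $v_1=0$ or $v_2=0$.

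The case $v_1=0$ forces $\bv$ to be proportional to $(0,0,1,0)^\T$ modulo $K\bp$; together with $\bp$ it spans the generator $g(1,0)$, which is excluded from the notion of \emph{proper} osculating tangent. The case $v_2=0$ forces $\bv$ to be proportional to $(0,1,0,0)^\T$ modulo $K\bp$; a direct inspection shows that the line $\cV(X_2,X_3)$ restricts $f$ to $-X_1^3$, so it meets $F$ only at $P(0,0)$ and with multiplicity three, confirming existence, properness and uniqueness of the osculating tangent at $P(0,0)$. To finish, I would simply read off the second column of $M_{u_1,u_2,1}$: it equals $(0,1,3u_1,u_2)^\T$, which is exactly $M_{u_1,u_2,1}\cdot(0,1,0,0)^\T$, and together with $M_{u_1,u_2,1}\cdot\bp=P(u_1,u_2)$ this yields the claim. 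I do not anticipate any real obstacle: the group reduction removes all parameters, leaving only a one-line polynomial expansion and an elementary case distinction.
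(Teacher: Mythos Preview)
Your proposal is correct and follows the same global strategy as the paper: reduce to the base point $P(0,0)$ via the transitive action of $G$ on $F\setminus\omega$, classify the osculating tangents there, and transport the result by $M_{u_1,u_2,1}$ (whose second column is indeed $(0,1,3u_1,u_2)^\T$). The only difference is in the local analysis at $P(0,0)$: you expand $f(\bp+t\bv)$ directly and read off the factorization $v_1v_2=0$ of the $t^2$-coefficient, whereas the paper argues geometrically that the tangent plane $\cV(X_3)$ cuts $F$ in the generator $\cV(X_1,X_3)$ together with the parabola $l$ of~(\ref{eq:leitkegelschnitt}), so that any non-generator tangent at $P(0,0)$ which is not the tangent of $l$ there must meet $l$ residually at a second point and hence cannot be proper. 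Your Taylor-expansion route is a shade more self-contained, since it does not appeal to the prior description of $F\cap\cV(X_3)$; the paper's argument, on the other hand, makes the geometric interpretation---proper osculating tangent $=$ tangent of the residual conic---more visible.
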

\begin{proof}
Let $(u_1,u_2)=(0,0)$. The tangent plane at $P(0,0)$ is $\cV(X_3)$. Any proper
osculating tangent through $P(0,0)$ is necessarily incident with this plane,
and it meets $F$ at $P(0,0)$ only. By (\ref{eq:leitkegelschnitt}), only the
tangent $t$ of the parabola $l$ at $P(0,0)$ can be a proper osculating tangent,
since every other tangent of $F$ at $P(0,0)$ meets $l$ residually at a point
$\neq P(0,0)$. The point at infinity of $t$ is $K(0,1,0,0)$. It is
straightforward to verify that $t$ meets $F$ at $P(0,0)$ with multiplicity
three. By the action of the matrix $M_{u_1,u_2,1}\in G$ the assertion follows
for any point $P(u_1,u_2) \in F \setminus g_{\infty}$.
\end{proof}

\begin{thm}\label{thm:S-eigenschaften}
Let
\begin{equation*}
    \cO:=\{t\in \cL\mid t\mbox{ is a proper osculating tangent of }F \}\cup\{g_\infty\}.
\end{equation*}
This set of lines has the following properties.
\begin{enumerate}
\item $\cO$ is a partial spread of\/ $\PP_3(K)$ if, and only if, $\Char K \neq
3$ and $K$ does not contain a third root of unity other than $1$.

\item If $\cO$ is a partial spread then it is maximal, i.e., it is not a proper
subset of any partial spread of\/ $\PP_3(K)$.

\item $\cO$ is a covering of\/ $\PP_3(K)$ if, and only if, $\Char K\neq 3$ and
each element of $K$ has a third root in $K$.
\end{enumerate}
\end{thm}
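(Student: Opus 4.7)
The plan is to reduce everything to one explicit incidence computation for a pair of proper osculating tangents, then read off (a), (b), (c) as separate case distinctions. By Lemma~\ref{lem:schmiegtangfernpunkt}, the proper osculating tangent at $P(u_1,u_2)$ is the span of $P(u_1,u_2)=K(1,u_1,u_2,u_1u_2-u_1^3)^{\T}$ and of its point at infinity $Q(u_1,u_2):=K(0,1,3u_1,u_2)^{\T}$. Since $Q(u_1,u_2)$ has second coordinate $1$, it never lies on $g_{\infty}=\cV(X_0,X_1)$, so in every characteristic $g_\infty$ is automatically disjoint from every proper osculating tangent; from this point on one only has to compare proper osculating tangents to one another.

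For (a), two such tangents meet iff the four points $P(u_1,u_2), Q(u_1,u_2), P(v_1,v_2), Q(v_1,v_2)$ are coplanar. Subtracting row~1 from row~3 and row~2 from row~4 of the corresponding $4\times 4$ determinant, and expanding along the first column, reduces the condition to
\begin{equation*}
 b^2-3ab(u_1+v_1)+3a^2(u_1^2+u_1v_1+v_1^2)=0,\qquad a:=v_1-u_1,\ b:=v_2-u_2.
\end{equation*}
If $a=0$ then $b=0$, so tangents with $u_1=v_1$ can only meet when they coincide. If $a\ne 0$, dividing by $a^2$ and setting $\lambda:=b/a$ turns this into the quadratic $\lambda^2-3(u_1+v_1)\lambda+3(u_1^2+u_1v_1+v_1^2)=0$, whose completed-square discriminant is $-3(u_1-v_1)^2$. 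In characteristic~$3$ the quadratic collapses to $\lambda^2=0$, and any two tangents with $u_2=v_2$, $u_1\ne v_1$ meet, so $\cO$ is not a partial spread. In characteristic $\ne 2,3$ the quadratic has a root in $K$ iff $-3$ is a square, equivalently iff $K$ contains a primitive third root of unity $\zeta$ (note $(\zeta-\zeta^2)^2=-3$). For characteristic~$2$ I would handle both directions by specialisation: $u_1=1$, $v_1=0$ produces the polynomial $\lambda^2+\lambda+1$, whose roots are exactly the primitive third roots of unity, and if $\zeta\in K$ then $(u_1,u_2)=(1,0)$, $(v_1,v_2)=(0,\zeta)$ realises the vanishing explicitly.

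For (b) I would show that $\omega$ is entirely covered by $\cO$: points with $X_1=0$ are on $g_\infty$, and an arbitrary point $K(0,1,b,c)^{\T}$ coincides with $Q(b/3,c)$ (here $3$ is invertible by (a)) and so lies on the proper osculating tangent at $P(b/3,c)$. Maximality is then immediate: any line $t\notin\cO$ meets $\omega$ in a point already on some $t'\in\cO$, and $t\ne t'$ forces $t\cap t'\ne\emptyset$, violating the partial-spread property; a line $t\subset\omega$ likewise meets $g_\infty$. For (c) I would split along $\omega$. In characteristic~$3$ every $Q(u_1,u_2)$ equals $K(0,1,0,u_2)^{\T}\in\cV(X_0,X_2)$, so points of $\omega$ outside $\cV(X_0,X_1)\cup\cV(X_0,X_2)$ are never covered and $\cO$ is not a covering. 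Otherwise $\omega$ is covered as in (b), and an affine point $K(1,p_1,p_2,p_3)^{\T}$ lies on the tangent at $P(u_1,u_2)$ iff $(1,p_1,p_2,p_3)^{\T}=P(u_1,u_2)+\beta\,(0,1,3u_1,u_2)^{\T}$ for some $\beta\in K$; eliminating $\beta=p_1-u_1$, then $u_2=p_2-3u_1p_1+3u_1^2$, and substituting into the last coordinate collapses the system to
\begin{equation*}
 (u_1-p_1)^3 \;=\; p_1p_2-p_1^3-p_3.
\end{equation*}
The right-hand side sweeps $K$ as $(p_1,p_2,p_3)$ varies (already for $p_1=p_2=0$), so $\cO$ covers every affine point iff every element of $K$ is a cube.

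The main obstacle is the characteristic~$2$ half of (a), where the completed-square discriminant argument degenerates and forces one to argue instead by specialisations and an explicit counterexample; everything else is routine bookkeeping (one $4\times 4$ determinant expansion and a three-step elimination).
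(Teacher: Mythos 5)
Your overall strategy coincides with the paper's: reduce (a) to the vanishing of a $4\times 4$ determinant, prove (b) by observing that every point of $\omega$ lies on a line of $\cO$, and prove (c) by collapsing an incidence system to a cube-root condition. Your determinant $b^2-3ab(u_1+v_1)+3a^2(u_1^2+u_1v_1+v_1^2)$ is correct and specializes at $(u_1,u_2)=(0,0)$ to the paper's $u_2^2-3u_1^2u_2+3u_1^4$; your parts (b) and (c) are correct and essentially identical to the paper's. The one substantive methodological difference is in (a): the paper first uses the transitive action of the group $G$ on $F\setminus g_\infty$ to normalize one of the two base points to $P(0,0)$ and then substitutes $u_2=(2+y)u_1^2$, which reduces the condition to $y^2+y+1\neq 0$ uniformly in \emph{every} characteristic. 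You keep both points general and analyse the quadratic in $\lambda=b/a$ via its discriminant $-3(u_1-v_1)^2$, which is exactly what forces your separate characteristic-$2$ discussion.

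That characteristic-$2$ discussion contains a genuine gap. You claim to ``handle both directions by specialisation'' at $u_1=1$, $v_1=0$, but a single specialisation can only prove the existence direction (if a primitive cube root $\zeta$ lies in $K$, then two distinct tangents meet; your explicit pair is fine). It cannot prove the universal direction: that when $\zeta\notin K$ \emph{no} pair of distinct proper osculating tangents meets. For that you must show that for \emph{all} $u_1\neq v_1$ the quadratic $\lambda^2+a\lambda+(u_1^2+u_1v_1+v_1^2)$ (characteristic $2$, $a=u_1+v_1$) has no root in $K$, and this polynomial is not literally $\lambda^2+\lambda+1$: its constant term equals $a^2+u_1v_1$ and varies with $u_1v_1$. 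The statement is true and the repair is short --- substitute $\lambda=a\mu+u_1$ (the additive analogue of completing the square), which turns the quadratic into $a^2(\mu^2+\mu+1)$, so solvability is equivalent to $X^2+X+1$ having a root in $K$ after all; alternatively, adopt the paper's normalisation and substitution, which avoids the case split entirely. With that one step supplied, your proof is complete.
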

\begin{proof}
(a) It is immediate from Lemma \ref{lem:schmiegtangfernpunkt} that all proper
osculating tangents of $F$ are skew to $g_\infty$. So it suffices to discuss
whether or not two distinct proper osculating tangents of $F$ have a point in
common. As the group $G$ acts transitively on $F\setminus g_\infty$, all proper
osculating tangents of $F$ are in one orbit of $G$. So it is enough consider
the osculating tangents at distinct points $P(0,0)$ and $P(u_1,u_2)$. By Lemma
\ref{lem:schmiegtangfernpunkt}, these lines are skew if, and only if,
\begin{equation}\label{eq:schnittbedingung}
    \det \Mat4{
  1 & 0 & 1 & 0 \\
  0 & 1 & u_1 & 1 \\
  0 & 0 & u_2 & 3u_1  \\
  0 & 0 & u_1u_2-u_1^3 & u_2  } = u_2^2-3u_1^2u_2+3u_1^4\neq 0.
\end{equation}
If $u_1=0$ then $u_2\neq 0$, whence (\ref{eq:schnittbedingung}) holds
irrespective of the ground field. Otherwise we substitute $u_2=(2+y)u_1^2$ with
$y\in K$. Hence (\ref{eq:schnittbedingung}) turns into $u_1^4(y^2+y+1)\neq 0$.
Observing $(X^2+X+1)(X-1)=X^3-1\in K[X]$, we see that $X^2+X+1$ has a zero in
$K$ precisely when the following holds: Either $\Char K=3$, since in this case
$X^2+X+1=(X-1)^2$, or $\Char K\neq 3$ and there exists a third root of unity
$w\neq 1$ in $K$, since then $1^2+1+1\neq 0$.

(b) We infer from (a) that $\Char K\neq 3$. Thus Lemma
\ref{lem:schmiegtangfernpunkt} implies that each point at infinity is incident
with a proper osculating tangent or just with the line $g_{\infty}$. As every
line of $\PP_3(K)$ has a point in common with the plane at infinity and each
point of the plane at infinity is on a line of the partial spread, the partial
spread $\cO$ is maximal.

(c) First, let $\Char K=3$. It suffices to show that $\cO$ cannot be a
covering of $\PP_3(K)$. By Lemma~\ref{lem:schmiegtangfernpunkt}, all proper
osculating tangents meet the line
\begin{equation}\label{def:knoten}
  n:=\cV(X_0,X_2).
\end{equation}
Clearly, there exists a point in $\omega\setminus(n\cup g_\infty)$. This point
is not incident with any line of $\cO$.

\par

Next, assume $\Char K\neq 3$. By the proof of (b), we may restrict ourselves to
affine points. A point $K(1,p_1,p_2,p_3)$ is on a line of $\cO$ if, and only
if, there is a pair $(u_1,u_2) \in K^2$ and an $s\in K$ such that
\begin{equation*}
    (1,p_1,p_2,p_3)^{\T}=(1,u_1,u_2,u_1u_2-u_1^3)^{\T}+s(0,1,3u_1,u_2)^{\T}.
\end{equation*}
So we obtain the following system of equations in the unknowns $u_1,u_2,s$:
\begin{equation*}
        u_1  =  p_1-s,\;\;
        u_2  =  p_2-3s(p_1-s),\;\;
        s^3  =  p_3-(p_1p_2-p_1^3).
\end{equation*}
This system has a solution precisely when $p_3-(p_1p_2-p_1^3)$ has a third root
in $K$. As $p_3-(p_1p_2-p_1^3)$ can assume any value in $K$, the assertion
follows.
\end{proof}

\begin{nrtxt}
By the above, an affine point lies on a line of $\cO$ if, and only if, it can
be written in the form $K(1,p_1,p_2,p_1p_2-p_1^3 + s^3)$ with $p_1,p_2,s\in K$.

The results of Theorem~\ref{thm:S-eigenschaften} were established in
\cite[Theorem~6.1]{jha+j96a} and \cite[Teorema~1]{spera-86a} in a completely
different way. In those papers the reader will also find conditions for a field
$K$ to meet one of the algebraic conditions given in (a), (b), or (c).

We noted in Lemma \ref{lem:dualitaet} that $F$ admits a duality which is easily
seen to fix $\cO$, as a set of lines. Hence the dual counterparts of the
characterizations given in Theorem~\ref{thm:S-eigenschaften} hold as well. Thus
$\cO$ is a (maximal partial) spread if, and only if, it is a (maximal partial)
dual spread. Observe that the point $Z=K(0,0,0,1)^\T$ takes over the role of
the plane $\omega$ in the dual setting.
\end{nrtxt}

\begin{nrtxt}
Let $\Char K=3$. Recall from (\ref{def:knoten}) that $n=\cV(X_0,X_2)$. By
(\ref{eq:knotengerade}), every point of $n\setminus\{ Z \}$ is a \emph{nucleus}
of $F$, i.e. a point off $F$, where all partial derivatives (\ref{eq:partial})
vanish; see \cite[p.~50]{hirs-98}. We refer also to
\cite[Proposition~3.17]{definis+d-83}, where nuclei are defined in a slightly
different way (including double points of $F$). Even though $Z$ is not a
nucleus according to our definition, we shall refer to $n$ as being the
\emph{line of nuclei}. We established in the proof of
Theorem~\ref{thm:S-eigenschaften}~(c) that all proper osculating tangents meet
the line of nuclei. This result will be improved in Theorem~\ref{thm:par.netz}.
\end{nrtxt}

\begin{nrtxt}
Let $K=\RR$ so that $\cO$ is a spread. In order to show that this is in fact
the BW-spread, as described in \cite[Satz~3]{betten-73a}, we apply the
collineation
\begin{equation*}
  \alpha: \PP_3(K)\to\PP_3(K) : K(x_0,x_1,x_2,x_3)^\T
             \mapsto
             K\left(x_0,x_1,\frac{x_2}{3}, \frac{x_3}{3}\right)^\T
\end{equation*}
which fixes the line $g_\infty$. By Lemma \ref{lem:schmiegtangfernpunkt}, any
line of $\alpha(\cO\setminus\{g_\infty\})$ has the form
\begin{equation*}
    K\left(1,u_1,\frac{u_2}{3},\frac{u_1u_2}{3} - \frac{u_1^3}{3}\right)^\T
    +
    K\left(0,1,u_1,\frac{u_2}{3}\right)^\T
\end{equation*}
with $(u_1,u_2)\in \RR^2$. By joining this line with $Z=K(0,0,0,1)^\T$ and
$K(0,0,1,0)^\T$, we obtain two distinct planes with equations
\begin{equation*}
  x_2 = \left(\frac{u_2}3 - u_1^2\right)x_0 + u_1x_1
  \mbox{ and }
  x_3 = -\frac{u_1^3}{3}\,x_0 + \frac{u_2}{3}\,x_1,
\end{equation*}
respectively. The substitutions $x_0=:x$, $x_1=:y$, $x_2=:u$, $x_3=:v$,
${u_2}/{3}-u_1^2=:t$, and $u_1=:s$ turn these equations into
\begin{equation*}
  u = tx + sy
  \mbox{ and }
  v = -\frac{s^3}{3}x + (s^2+t)y.
\end{equation*}
These are the formulas from \cite[Satz~3]{betten-73a}. In particular, we have
the transversal homeomorphism of $\RR^2$ with $(t,s) \mapsto (-{s^3}/{3},
s^2+t)$.

\begin{center}
\unitlength1.0cm\small
\begin{picture}(6.0,7.68)
 \put(0.0,0.0){\includegraphics[width=6\unitlength]{./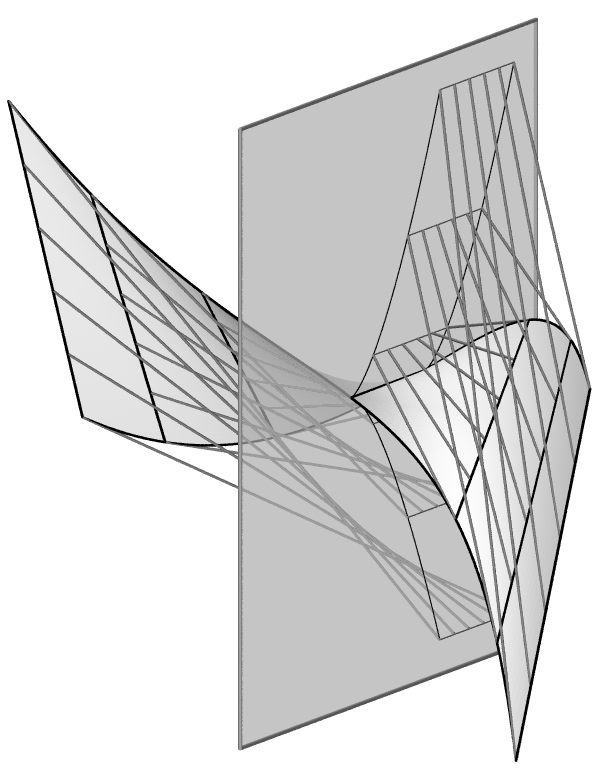}}
 \put(2.5,6.0){$\cV(X_1)$}
 \put(0.0,5.0){$F$}
 \put(5.5,1.0){$g(1,s)$}
 \put(5.6,5.6){$\cR^-(s)$}
\end{picture}
\par
\refstepcounter{abbildung}\label{fig:CFHP} Figure \ref{fig:CFHP}
\end{center}

It is also easy to see that our results coincide with \cite{walk-76a}, where
homogeneous coordinates with indices running from $1$ to $4$, say
$x'_1,x'_2,x'_3,x'_4$, were used. The appropriate transformation from our
coordinates $x_0,x_1,x_2,x_3$ is given by $x'_1=x_3 /3$, $x'_2=x_2/3$,
$x'_3=x_1$, and $x'_4=x_0$. The group $S_1$ used in \cite{walk-76a} is a
subgroup of our group $G$, whereas the reguli $\mathfrak{R}_i$ from
\cite{walk-76a} arise in our setting as follows: Take the set of all proper
osculating tangents at the points of a generator $g(1,s)$, $s\in K$, together
with $g_\infty$. This is easily seen to be a regulus, say $\cR^-(s)$, which
clearly is contained in $\cO$. In affine terms each such regulus is one family
of generators on a hyperbolic paraboloid. These hyperbolic paraboloids have
$g_\infty$ as a common generator and they share a common tangent plane at each
point of $g_\infty$. Thus, for example, each such paraboloid meets the plane
$\cV(X_1)\supset g_\infty$ residually in a line; all these lines are parallel,
as they pass through $K(0,0,1,0)^\T$. This is depicted in
Figure~\ref{fig:CFHP}.
\end{nrtxt}

\begin{nrtxt}\label{nr:reguli}
Assume $\Char K\neq 3$. Then the lines of $\cO$ other than $g_\infty$ define
(by intersection) an injective mapping $\omega\setminus g_\infty\to
\cV(X_1)\setminus g_\infty$; compare with the construction of a spread via a
transversal mapping due to N.~Knarr \cite[pp.~26--29]{knarr-95}. An
illustration is given in Figure~\ref{fig:transversal}, where temporarily
$\cV(X_0+X_1)$ takes over the role of the plane at infinity. (The curves in
$\cV(X_1)$ are semicubical parabolas.)
\begin{center}
\unitlength1.0cm\small
\begin{picture}(8,6)
 \put(0.0,0.0){\includegraphics[width=8\unitlength]{./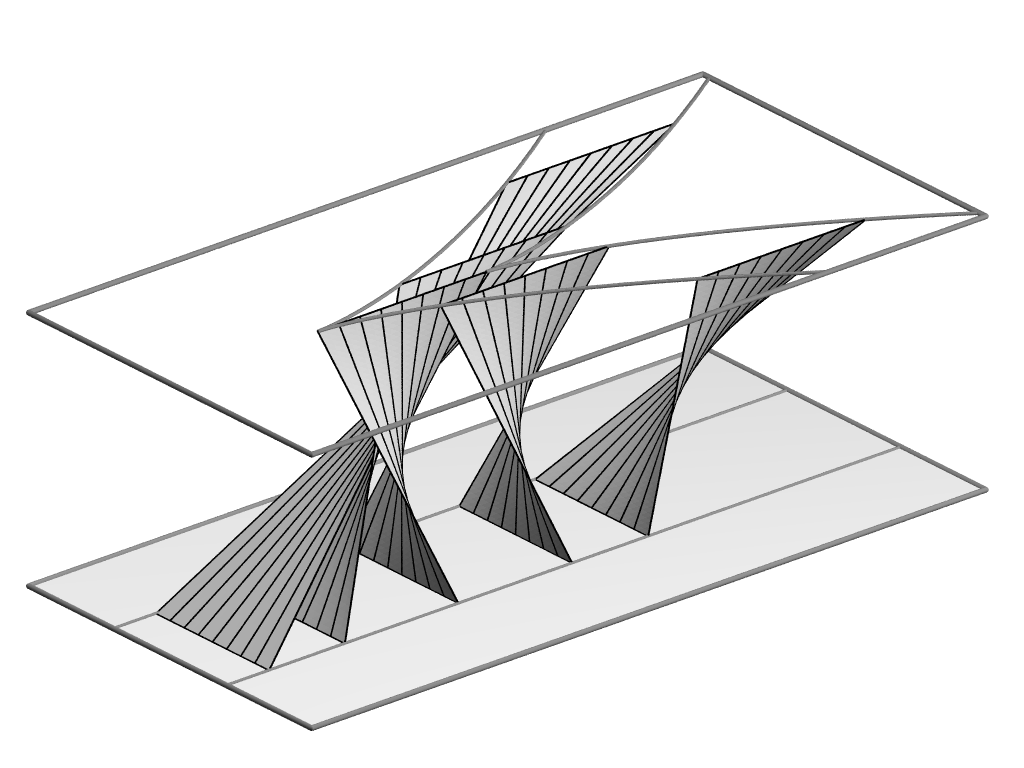}}
 \put(0.8,1.3){$\omega$}
 \put(0.8,3.4){$\cV(X_1)$}
\end{picture}
\par
\refstepcounter{abbildung}\label{fig:transversal} Figure \ref{fig:transversal}
\end{center}
\end{nrtxt}

\section{The Klein image of the Betten-Walker spread}\label{se:klein}

\begin{nrtxt}
In terms of coordinates, the exterior square $K^{4 \times 1} \wedge K^{4 \times
1}$ coincides with $K^{6 \times 1}$ by setting
\begin{equation*}
  \bp \wedge \bq= (p_0,p_1,p_2,p_3)^{\T} \wedge
(q_0,q_1,q_2,q_3)^{\T}=(y_{01},y_{02},y_{03},y_{12},y_{13},y_{23})^{\T}
\end{equation*}
where $y_{ij}=p_iq_j-p_jq_i$. Given that $\bp, \bq$ are linearly independent
the entries of the column vector $(y_{01},y_{02},\dots,y_{23})^{\T}$ are the
well known Pl{\"u}cker coordinates of the line $K\bp+K\bq$. The Klein mapping
$\kappa: \cL \rightarrow Q: K\bp+K\bq \mapsto K(\bp \wedge \bq)$ is a bijection
from the line set $\cL$ of $\PP_3(K)$ onto Klein quadric $Q:=\cV(k(\bY))\subset
\PP_5(K)$, where $\bY=( Y_{01},Y_{02},\ldots,Y_{23})$ denotes a family of six
independent indeterminates over $K$ and
\begin{equation*}
   k(\bY):= Y_{01}Y_{23}-Y_{02}Y_{13}+Y_{03}Y_{12}.
\end{equation*}
The polarity of the Klein quadric will be denoted by $\perp$. Observe that
$\perp$ is symplectic if, and only if, $\Char K=2$. Table 15.10 in
\cite[pp.~29--31]{hirs-85} contains all the information on the Klein mapping
which we shall use in this section without further reference.
\par
A set of lines in $\PP_3(K)$ is said to be \emph{algebraic\/} if its Klein
image is the set of $K$-rational points of an algebraic variety in $\PP_5(K)$.
\end{nrtxt}

\begin{nrtxt}
Let us first calculate the Klein image of the set of generators of $F$: We
obtain, for all $(s_0,s_1) \in K^2 \setminus \{(0,0)\}$, that
\begin{equation}\label{eq:erzeugendenbild}
    \kappa(g(s_0,s_1)) = K(0,s_0^3,s_0^2s_1,s_0^2s_1,s_0s_1^2,s_1^3)^{\T}
    \in \PP_5(K).
\end{equation}
So we get a twisted cubic \cite[Chapter~21]{hirs-85} lying in the
three-dimensional subspace
\begin{equation*}
    C:=\cV(X_{01},X_{03}-X_{12})\subset\PP_5(K).
\end{equation*}
The intersection $C\cap Q$ is a quadratic cone with vertex
$W_\infty:=\kappa(g_{\infty})=K\bw_\infty$, where
\begin{equation}\label{def:w.inf}
    \bw_\infty:=(0,0,0,0,0,1)^{\T}.
\end{equation}
This cone is the Klein image of a parabolic linear congruence with axis
$g_{\infty}$, which contains all generators of $F$. The subspace $C^\perp$ is
the line spanned by $W_\infty$ and $W:=K\bw$, where
\begin{equation*}
    \bw:=(0,0,1,-1,0,0)^\T.
\end{equation*}
This line meets the Klein quadric at $W_\infty$ only. We have $C^\perp\cap
C=\{W_\infty\}$ for $\Char K\neq 2$, but $C^\perp\subset C$ otherwise.

In the subsequent theorem we exhibit algebraic equations which are satisfied by
the Klein image of $\cO$; we shall explain in \ref{nr:gleichungen} how these
equations were found.
\end{nrtxt}

\begin{thm}\label{thm:algebraisch}
Suppose $\Char K\neq 3$. Let $\cO$ be given as in
Theorem~\emph{\ref{thm:S-eigenschaften}}, and let $\cL[Z,\omega]$ be the pencil
of lines in the plane $\omega=\cV(X_0)$ with centre $Z=K(0,0,0,1)^\T$. Consider
the polynomials
\begin{eqnarray}
    h_1(\bY)&:=&3 Y_{01}( Y_{12}+Y_{03}) -Y_{02}^{2},\label{eq:h1}\\
    h_2(\bY)&:=&3Y_{02}Y_{13}- ( Y_{12}+Y_{03}) ^{2},\label{eq:h2}\\
    h_3(\bY)&:=&9Y_{01}Y_{13}-Y_{02} ( Y_{12}+Y_{03}).\label{eq:h3}
\end{eqnarray}
Then $\kappa(\cO\cup\cL[Z,\omega])$ equals the intersection of the variety
\begin{equation*}
   J:= \cV(h_1(\bY),h_2(\bY),h_3(\bY))\subset\PP_5(K)
\end{equation*}
with the Klein quadric $Q=\cV(k(\bY))$.
\end{thm}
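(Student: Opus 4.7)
The plan is to verify the asserted equality by establishing both inclusions, and the bulk of the work amounts to explicit Plücker calculations.

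First, for $\kappa(\cO\cup\cL[Z,\omega])\subseteq J\cap Q$ I would compute Plücker vectors directly. By Lemma~\ref{lem:schmiegtangfernpunkt}, the osculating tangent at $P(u_1,u_2)$ is spanned by $(1,u_1,u_2,u_1u_2-u_1^3)^\T$ and $(0,1,3u_1,u_2)^\T$, and its Klein image is
\begin{equation*}
 K(1,\,3u_1,\,u_2,\,3u_1^2-u_2,\,u_1^3,\,u_2^2-3u_1^2u_2+3u_1^4)^\T ,
\end{equation*}
which satisfies $h_1=h_2=h_3=0$ by a direct substitution. The image $\kappa(g_\infty)=W_\infty$ trivially annihilates the three polynomials, and a line in $\cL[Z,\omega]$ can be parametrized as $K(0,a,b,0)^\T+K(0,0,0,1)^\T$ with $(a,b)\neq(0,0)$, so it has Plücker vector $(0,0,0,0,a,b)^\T$, clearly in $J\cap Q$.

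For the reverse inclusion I would take $K\by\in J\cap Q$ and split cases on whether $y_{01}$ vanishes. If $y_{01}\ne 0$, I normalize $y_{01}=1$ and set $u_1:=y_{02}/3$ (legitimate since $\Char K\neq 3$) and $u_2:=y_{03}$. Then $h_1=0$ forces $y_{12}=3u_1^2-u_2$, $h_3=0$ forces $y_{13}=u_1^3$, and the Klein relation $k(\by)=0$ fixes $y_{23}=u_2^2-3u_1^2u_2+3u_1^4$ (the vanishing of $h_2$ becomes automatic); comparing with the vector displayed above exhibits $K\by$ as the Klein image of the osculating tangent at $P(u_1,u_2)$. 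If instead $y_{01}=0$, then $h_1=0$ reduces to $y_{02}^{\,2}=0$, hence $y_{02}=0$; next $h_2=0$ becomes $(y_{12}+y_{03})^2=0$, so $y_{12}=-y_{03}$; and finally $k(\by)=0$ collapses to $-y_{03}^{\,2}=0$, whence $y_{03}=y_{12}=0$. Thus only the last two coordinates of $\by$ can be non-zero.

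It remains to recognize the resulting vector geometrically. Under the standard interpretation of Plücker coordinates, $y_{01}=y_{02}=y_{03}=0$ forces the corresponding line to lie in $\omega$, and, for such a line, the further vanishing $y_{12}=0$ means that its equation in $\omega$ (namely $y_{23}X_1-y_{13}X_2+y_{12}X_3=0$) has no $X_3$-term, which amounts to passing through $Z$. Hence $K\by\in\kappa(\cL[Z,\omega])$, completing the reverse inclusion. The main obstacle is largely bookkeeping in the case $y_{01}=0$: one must combine $h_1$, $h_2$, and the Klein equation in the right sequence to force the first four Plücker coordinates to vanish, and then translate the residual vector geometrically into the statement that the corresponding line belongs to the pencil $\cL[Z,\omega]$. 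The remaining steps are routine parametric computation, and the hypothesis $\Char K\neq 3$ enters only to legitimize the division by three used to recover $u_1$ in the first case.
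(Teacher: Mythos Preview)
Your proof is correct and follows essentially the same route as the paper: compute the Pl\"ucker vector of the osculating tangent at $P(u_1,u_2)$ for the forward inclusion, then for the reverse inclusion split on whether $y_{01}$ vanishes, using $h_1$, $h_3$, and $k$ (with $h_2$ redundant) in the first case and $h_1$, $h_2$, $k$ in the second to force $\by$ into the form $(0,0,0,0,y_{13},y_{23})^\T$. Your explicit geometric reading of this residual vector as a line of $\cL[Z,\omega]$ is a small addition over the paper, which simply identifies $\kappa(\cL[Z,\omega])$ as the line through $K(0,0,0,0,1,0)^\T$ and $\kappa(g_\infty)$.
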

\begin{proof}
(a) For all $(u_1,u_2)\in K^2$, the Klein image of the only proper osculating
tangent at $P(u_1,u_2)$ is the point with coordinates
\begin{equation}\label{eq:klein-schmiegtang}
  (1,3u_1,u_2,3u_1^2-u_2,u_1^3,3u_1^4-3u_1^2u_2+u_2^2)^\T.
\end{equation}
The Klein image of the pencil $\cL[Z,\omega]$ is the line spanned by
$K(0,0,0,0,1,0)$ and $K\bw_\infty=\kappa(g_\infty)$; see (\ref{def:w.inf}). Now
a direct verification shows $\kappa(\cO\cup\cL[Z,\omega])\subset (J\cap Q)$.

(b) In order to show $(J\cap Q)\subset \kappa(\cO\cup\cL[Z,\omega])$ we
determine all vectors
\begin{equation*}
    \by:=(y_{01},y_{02},y_{03},y_{12},y_{13},y_{23})^\T \in K^{6\times 1}
\end{equation*}
subject to $h_1(\by)=h_2(\by)=h_3(\by)=k(\by)=0$.

In a first step we determine all such vectors with $y_{01}\neq 0$. Without loss
of generality we may assume $y_{01}=1$; also we let $y_{02}=:u_1$ and
$y_{03}=:u_2$. From $h_1(\by)= 3 y_{12}+3 u_2 - 9u_1^2$ follows $y_{12}=3 u_1^2
- u_2$ which can be substituted in $h_3(\by) = 9 y_{13}- 9 u_1^3$. This gives
$y_{13}=u_1^3$. We calculate $k(\by)=y_{23}-3u_1^{4}+u_2 ( u_1^{2}-u_2)$ which
yields $y_{23} = 3u_1^{4}-3u_1^{2} u_2+u_2^{2}$. Altogether, we obtain
precisely the vectors given in (\ref{eq:klein-schmiegtang}), whence
$h_2(\by)=0$ holds too.

The second step is to look for all solutions with $y_{01}=0$. We infer from
$h_1(\by)=-y_{02}^2$ that $y_{02}=0$, from which we obtain
$h_2(\by)=-(y_{12}+y_{03})^2$. So $y_{12}=-y_{03}$. Now $k(\by)=-y_{03}^2$
gives $y_{03}=-y_{12}=0$. Summing up we obtain
\begin{equation*}
    \by:=(0,0,0,0,y_{13},y_{23})^\T\in K^{6\times 1}
\end{equation*}
which is either the zero vector or a representative of a point in
$\kappa(\cL[Z,\omega])$. Consequently, also $h_3(\by)=0$ is satisfied.
\end{proof}

\begin{nrtxt}\label{nr:gleichungen}
Let us shortly describe how the polynomials $h_i(\bY)$ were found: We noted
already in \ref{nr:reguli} that all proper osculating tangents at the points of
a generator $g(1,s)$, $s\in K$, together with $g_\infty$ form a regulus
$\cR^-(s)\subset \cO$. The opposite regulus $\cR^+(s)$, say, contains the
generator $g(1,s)$. Both reguli lie on a hyperbolic paraboloid which is known
in differential line geometry under the name \emph{Lie quadric\/} of $F$ along
the generator $g(1,s)$; cf., among others, \cite[pp.~33--37]{hosch-71a} or
\cite[pp.~67--68]{sauer-37}. In Figure~\ref{fig:reguli} some reguli $\cR^-(s)$
are visualized in an affine neighbourhood of the point $Z$. In this picture the
plane $\cV(X_3)$ appears at infinity. See also Figures~\ref{fig:CFHP} and
\ref{fig:transversal} for a different view of these reguli.
\begin{center}
\unitlength1.3cm\small
\begin{picture}(6,3.61)
 \put(0.0,0.0){\includegraphics[width=6\unitlength]{./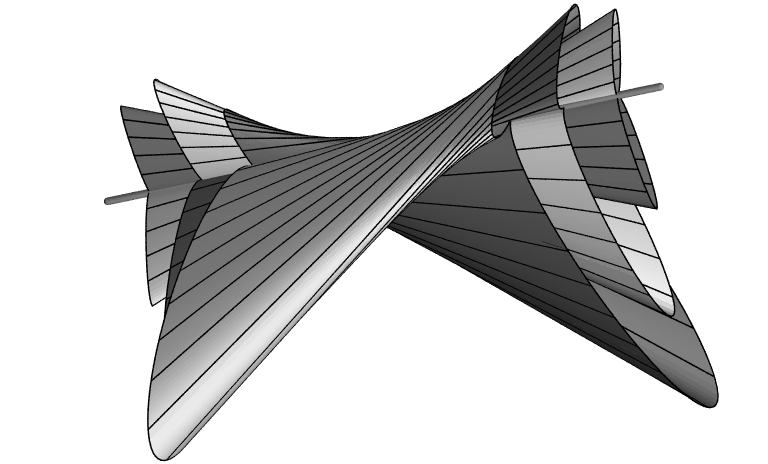}}
 \put(2.95,3.05){\makebox(0,0)[b]{$Z$}}
 \put(2.95,2.95){\vector(0,-1){0.3}} 
 \put(5.3,3.0){$g_\infty$}
\end{picture}
\par
\refstepcounter{abbildung}\label{fig:reguli} Figure \ref{fig:reguli}
\end{center}
Given a point $\kappa(g(1,s))$ of the twisted cubic (\ref{eq:erzeugendenbild})
let us denote by $\pi(s)$ the osculating plane at this point. The plane
$\pi(s)$ meets the cone $C\cap Q$ in a conic which is the Klein image of the
regulus $\cR^+(s)$. The Klein image of its opposite regulus $\cR^-(s)$ is a
conic lying in the plane $\pi(s)^\perp\supset C^\perp$. We choose the
three-dimensional subspace
\begin{equation*}
   B:=\cV(Y_{03},Y_{23})
\end{equation*}
which is skew to $C^\perp$. Thus $\pi(s)^\perp$ meets $B$ at a unique point. We
obtain this point by letting $u_1=s$ in (\ref{eq:klein-schmiegtang}) and by
projecting through $C^\perp$ to $B$. This gives (for every $u_2\in K$)
\begin{equation}\label{eq:leitkubik}
    K(1,3\,s,0,3\,{s}^{2},{s}^{3},0)^\T.
\end{equation}
Now we consider $s\in K$ to be variable. Up to the exceptional case when $\Char
K=3$, the planes $\pi(s)$ belong to a cubic developable, so that the points of
intersection $\pi(s)^\perp\cap B$ will belong to a twisted cubic. This is also
immediate from (\ref{eq:leitkubik}). It is well known, that a twisted cubic can
be obtained as the intersection of \emph{three\/} quadrics in $B$. (We used two
quadratic cones projecting the cubic from two different points, and a
hyperbolic quadric.) Each of these three quadrics in $B$ gives rise to a
quadratic cone in $\PP_5(K)$ by joining it with the line $C^\perp$. The
quadratic polynomials in Theorem~\ref{thm:algebraisch} describe these three
quadratic cones in $\PP_5(K)$ so that $J$ actually is a cone with vertex
$C^\perp$ having a twisted cubic in $B$ as its base.

We wish to emphasize that our approach is motivated by the Thas-Walker
construction linking flocks of cones with spreads: The osculating planes
$\pi(s)$ yield a flock of the quadratic cone $C\cap Q$ if, and only if, the set
$\cO$ is a spread of $\PP_3(K)$. See \cite[pp.~334--338]{thas-95a} and
\cite[Theorem~6.2]{jha+j96a}.

It should be noted here that we did not use results from the thesis of R.~Koch
\cite[pp.~18--19]{koch-68}. He described, over the real numbers, the congruence
of osculating tangents in terms of a cubic and a quadratic form (equations
(2.27) and (2.28) loc.\ cit.). In fact, his equation (2.28) corresponds, up to
a change of coordinates, to our polynomial (\ref{eq:h1}). However, the two
equations of Koch are also satisfied by the Pl{\"u}cker coordinates of \emph{all\/}
lines through the point $Z$, whereas our system of equations yields less lines
through that point.

The next result says that, from an algebraic point of view, our system of
equations (\ref{eq:h1}) -- (\ref{eq:h3}) is the best possible:
\end{nrtxt}

\begin{thm}\label{thm:abgeschlossen}
Suppose that $K$ is an infinite field with $\Char K\neq 3$. Let $\cO$ and
$\cL[Z,\omega]$ be given as in Theorems~\emph{\ref{thm:S-eigenschaften}} and
\emph{\ref{thm:algebraisch}}, respectively. If $h(\bY)\in K[\bY]$ is a form
such that $\kappa(\cO)\subset\cV(h(\bY))$ then
$\kappa(\cL[Z,\omega])\subset\cV(h(\bY))$.
\end{thm}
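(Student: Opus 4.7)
The plan is to use the infiniteness of $K$ to turn vanishing of $h$ on $\kappa(\cO)$ into a polynomial identity, and then to make a carefully chosen substitution that isolates the coefficients of the $Y_{13}^{i}Y_{23}^{j}$ monomials of $h$. Invoking~(\ref{eq:klein-schmiegtang}), define
$$ p(u_1,u_2) := h\bigl(1,3u_1,u_2,3u_1^2-u_2,u_1^3,3u_1^4-3u_1^2u_2+u_2^2\bigr) \in K[u_1,u_2]. $$
The hypothesis forces $p$ to vanish on $K^2$; since $K$ is infinite, $p\equiv 0$, and the identity persists after base change to $\bar K[u_1,u_2]$, where $\bar K$ denotes an algebraic closure.

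The heart of the argument is the following substitution: fix a root $c\in\bar K$ of $X^2-3X+3$, which exists because $\Char K\neq 3$ makes the discriminant $-3$ non-zero, and set $u_1=v_1$, $u_2=cv_1^2+dv_1$ with $v_1,d$ indeterminates. One checks that the relation $c^2-3c+3=0$ makes the would-be leading $v_1^{4}$-term of the sixth Pl{\"u}cker coordinate cancel, leaving $d(2c-3)v_1^{3}+d^{2}v_1^{2}$; the six coordinates then have $v_1$-degrees $0,1,2,2,3,3$, so the $v_1$-degree of $p(v_1,cv_1^2+dv_1)$ is at most $3n$, where $n=\deg h$.

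Next I would inspect the coefficient of $v_1^{3n}$ in this identity, viewed as an element of $\bar K[d][v_1]$. A short combinatorial check (using $\sum e_{ij}=n$ together with $e_{02}+2e_{03}+2e_{12}+3e_{13}+3e_{23}=3n$) shows that only monomials of the form $Y_{13}^{i}Y_{23}^{j}$ with $i+j=n$ can contribute, giving
$$ \sum_{i+j=n} c_{ij}\,\bigl(d(2c-3)\bigr)^{j} \;=\; 0 \quad\text{in } \bar K[d], $$
where $c_{ij}$ denotes the coefficient of $Y_{13}^{i}Y_{23}^{j}$ in $h$. Since $c=\tfrac{3}{2}$ does not satisfy $c^2-3c+3=0$, we have $2c-3\neq 0$, and comparing coefficients of $d^{j}$ yields $c_{ij}=0$ for every $i+j=n$. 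A point of $\kappa(\cL[Z,\omega])$ has the form $K(0,0,0,0,a,b)^{\T}$, so $h(0,0,0,0,a,b)=\sum_{i+j=n}c_{ij}\,a^{i}b^{j}=0$, which is the desired containment.

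The main obstacle is finding the right substitution. A more obvious try such as $u_2=cu_1^2$ alone reduces $p$ to a polynomial in $v_1$ whose top $v_1^{3n}$-coefficient furnishes only one linear equation among the $c_{ij}$, which is too little to conclude they all vanish; the additional perturbation $+dv_1$ is precisely what separates these coefficients by distinct powers of $d$ and produces the full system.
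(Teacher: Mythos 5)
Your proof is correct and follows essentially the same route as the paper: your substitution $u_2=cv_1^2+dv_1$ with $c^2-3c+3=0$ is exactly the paper's $u_2=(1-w)u_1^2+mu_1$ with $w=1-c$ a primitive third root of unity, and in both cases the key step is that the quadratic term kills the quartic part of $Y_{23}$ so that the degree-$3n$ behaviour in $u_1$ isolates $h(0,0,0,0,1,(2c-3)d)$. The only (cosmetic) difference is the finish: you keep $d$ as an indeterminate and compare coefficients of $d^j$ to get all $c_{ij}=0$ at once, whereas the paper fixes $m$, views the image as a twisted cubic missing one point, and recovers the line $\kappa(\cL[Z,\omega])$ by letting $m$ vary and taking closure.
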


\begin{proof}
Given such a form $h(\bY)\in K[X]$ we obtain from (\ref{eq:klein-schmiegtang})
the identity
\begin{equation}\label{eq:einsetz}
     h((1,3u_1,u_2,3u_1^2-u_2,u_1^3,3u_1^4-3u_1^2u_2+u_2^2)^\T)=0
     \mbox{ for all }(u_1,u_2)\in K^2.
\end{equation}
Due to $\Char K\neq 3$ there exists a field extension $\overline K/K$ of degree
$[\overline K:K]\leq 2$ containing a third root of unity $w\neq 1$. We infer
from a standard result on zeros of polynomials over an infinite domain (see,
for example, \cite[\S~28]{vand-66}) that (\ref{eq:einsetz}) holds also for all
$(u_1, u_2)\in\overline K^2$. We allow $u_1\in\overline K$ and replace $u_2$
with $(1-w)u_1^2+mu_1$ in (\ref{eq:klein-schmiegtang}), where $m\in \overline
K$ is fixed, but arbitrary. Thus (\ref{eq:klein-schmiegtang}) turns into
\begin{equation*}
    (1, 3u_1, ( 1-w ) u_1^{2}+m u_1, ( 2+w ) u_1^{2}-m u_1,
     u_1^{3}, -(2w+1) m u_1^{3} + {m}^{2} u_1^{2})^\T.
\end{equation*}
This is for $u_1\in \overline K$ a rational parametrization of all but one
points of a twisted cubic which is contained in the variety of $\PP_3(\overline
K)$ determined by $h(\bY)$ considered as an element of $\overline K[\bY]$. As
$\overline K$ is infinite, also the remaining point of the twisted cubic,
namely
\begin{equation*}
  \overline K(0,0,0,0,1,-(2w+1)m)^\T,
\end{equation*}
is a point of that variety. We claim that $2w+1\neq 0$: This is trivial when
$\Char K=2$. For $\Char K\neq 2$ the assertion holds, because our assumption
$\Char K\neq 3$ guarantees that $(-1/2)^3\neq 1$, whence $w\neq -1/2$.

Thus, for appropriate values of $m\in\overline K$, we see that all points of
the line $\kappa(\cL[Z,\omega])$ except $\kappa(g_\infty)$ belong to the
variety $\cV(h(\bY))$. Finally, since $K$ is infinite, we obtain
$\kappa(g_\infty)\in\cV(h(\bY))$.%
\end{proof}

\begin{cor}
Infinite Betten-Walker spreads are not algebraic sets of lines.
\end{cor}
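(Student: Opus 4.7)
The plan is a short proof by contradiction that feeds Theorem~\ref{thm:abgeschlossen} an arbitrary defining system for $\kappa(\cO)$. Let $\cO$ be an infinite Betten--Walker spread. Then $K$ is infinite and, by Theorem~\ref{thm:S-eigenschaften}, $\Char K\neq 3$, so the hypotheses of Theorem~\ref{thm:abgeschlossen} are in force. Suppose, toward a contradiction, that $\cO$ is algebraic. By the definition given in Section~\ref{se:klein}, this means that $\kappa(\cO)=\cV(h_1(\bY),\ldots,h_r(\bY))$ for suitable homogeneous $h_i\in K[\bY]$.

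Each $h_i$ then vanishes on $\kappa(\cO)$, so Theorem~\ref{thm:abgeschlossen} forces each $h_i$ to vanish on $\kappa(\cL[Z,\omega])$ as well. Consequently $\kappa(\cL[Z,\omega])\subseteq\cV(h_1,\ldots,h_r)=\kappa(\cO)$, and since the Klein mapping $\kappa$ is a bijection, this translates back into $\cL[Z,\omega]\subseteq\cO$.

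It only remains to derive a contradiction from this inclusion. The pencil $\cL[Z,\omega]$ consists of all lines of $\omega=\cV(X_0)$ through $Z$, so it contains $|K|+1$ lines, which is infinite. On the other hand, Lemma~\ref{lem:schmiegtangfernpunkt} exhibits every proper osculating tangent at $P(u_1,u_2)$ as meeting $\omega$ in the single point $K(0,1,3u_1,u_2)^\T$; such a line is therefore not contained in $\omega$. The only line of $\cO$ lying in $\omega$ is thus $g_\infty$, giving $|\cL[Z,\omega]\cap\cO|=1$ and the desired contradiction.

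I do not expect any genuine obstacle: the substantive content is entirely carried by Theorem~\ref{thm:abgeschlossen}, and the only remaining verification, namely that $\omega$ contains precisely one line of $\cO$, is immediate from the explicit parametric description of proper osculating tangents.
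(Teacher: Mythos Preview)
Your argument is correct and is precisely the intended one: the paper states the corollary immediately after Theorem~\ref{thm:abgeschlossen} without a separate proof, treating it as the obvious consequence you have spelled out. The only refinement worth noting is that you do not actually need $|K|+1$ to be infinite---it suffices that $\cL[Z,\omega]$ contains more than the single line $g_\infty$, which holds for any field.
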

We now turn to the remaining case of characteristic $3$. Here the situation is
completely different:

\begin{thm}\label{thm:par.netz}
Let $\Char K=3$. Then $\cO\cup\cL[Z,\omega]$ is a subset of a parabolic linear
congruence $\cN$ whose Klein image equals the quadratic cone $Q\cap D$, where
\begin{equation*}
      D:= \cV(X_{02},X_{03}+X_{12})
\end{equation*}
is a three-dimensional subspace of $\PP_5(K)$. The axis of the congruence $\cN$
is the line $n$ of nuclei. The congruence $\cN$ coincides with
$\cO\cup\cL[Z,\omega]$ if, and only if, each element of $K$ has a third root in
$K$.
\end{thm}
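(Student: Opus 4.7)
The strategy parallels that of Theorem~\ref{thm:algebraisch}: first I compute the Klein image of every line in $\cO\cup\cL[Z,\omega]$, verify the inclusion in $Q\cap D$, then analyse $Q\cap D$ directly to identify it with $\cN$, and finally decide when equality holds.

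Since $\Char K=3$, Lemma~\ref{lem:schmiegtangfernpunkt} simplifies because $3u_1=0$: the proper osculating tangent at $P(u_1,u_2)$ is the join of $P(u_1,u_2)$ with the point $K(0,1,0,u_2)^\T$. A routine Pl\"ucker calculation gives its Klein image as
\begin{equation*}
(1,\,0,\,u_2,\,-u_2,\,u_1^3,\,u_2^2)^\T,
\end{equation*}
which visibly satisfies $Y_{02}=0$ and $Y_{03}+Y_{12}=0$ and hence lies in $D$. Similarly $\kappa(g_\infty)=(0,0,0,0,0,1)^\T$ lies in $D$, and the pencil line $\kappa(\cL[Z,\omega])$, spanned by $\kappa(g_\infty)$ and $(0,0,0,0,1,0)^\T$, does too. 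Thus $\kappa(\cO\cup\cL[Z,\omega])\subseteq Q\cap D$.

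The subspace $D$ is cut out by two independent linear forms, so $\dim D=3$. Substituting $Y_{02}=0$ and $Y_{12}=-Y_{03}$ into $k(\bY)$ reduces the Klein equation on $D$ to $Y_{01}Y_{23}-Y_{03}^2$; in the coordinates $Y_{01},Y_{03},Y_{13},Y_{23}$ on $D$ this is a quadratic form of rank $3$ (the variable $Y_{13}$ does not appear, and rank $3$ holds because $\Char K\neq 2$). Hence $Q\cap D$ is a quadratic cone whose unique vertex is the locus $Y_{01}=Y_{03}=Y_{23}=0$, namely the point $(0,0,0,0,1,0)^\T\in\PP_5(K)$; a one-line Pl\"ucker computation using the spanning vectors $(0,1,0,0)^\T$ and $(0,0,0,1)^\T$ of $n=\cV(X_0,X_2)$ from (\ref{def:knoten}) identifies this vertex with $\kappa(n)$. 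Consequently $\cN:=\kappa^{-1}(Q\cap D)$ is a parabolic linear congruence whose axis is the line $n$ of nuclei, and all the first assertions follow.

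For the final equivalence I parametrize $Q\cap D$. Any point with $Y_{01}\neq 0$ normalizes to $(1,0,u_2,-u_2,v,u_2^2)^\T$ with $u_2,v\in K$ arbitrary; comparing with the Pl\"ucker vector above, such a point lies in $\kappa(\cO)$ if and only if $v$ is a third power in $K$. If instead $Y_{01}=0$, then $Y_{03}^2=0$ forces $Y_{03}=Y_{12}=0$, so the point has the form $(0,0,0,0,y_{13},y_{23})^\T$ and already belongs to $\kappa(\cL[Z,\omega])$. Therefore $\cN=\cO\cup\cL[Z,\omega]$ exactly when every element of $K$ has a third root in $K$. No step presents a serious obstacle; the only care required is to keep track of the characteristic-$3$ simplifications (for example $3u_1=0$ in Lemma~\ref{lem:schmiegtangfernpunkt}) when performing the Pl\"ucker calculations.
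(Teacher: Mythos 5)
Your proposal is correct and follows essentially the same route as the paper: compute the Klein image $(1,0,u_2,-u_2,u_1^3,u_2^2)^\T$ of the osculating tangents, check the inclusion in $Q\cap D$, identify $Q\cap D$ as a quadratic cone with vertex $\kappa(n)$, and read off the third-root condition from the coordinate $u_1^3$. The only cosmetic difference is that you exhibit the cone by substituting the equations of $D$ into $k(\bY)$, while the paper instead observes that the polar line $D^\perp$ is tangent to $Q$ at $\kappa(n)$; both are immediate.
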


\begin{proof}
The polar subspace $D^{\perp}$ (with respect to the Klein quadric $Q$) is the
line joining
\begin{equation*}
    \kappa(n)=K(0,0,0,0,1,0)^\T \in Q
    \mbox{ and }
              K(0,0,1,1,0,0)^\T \not\in Q.
\end{equation*}
So $D^{\perp}\not\subset Q$ is a tangent of the Klein quadric and $D \cap Q$ is
a quadratic cone with vertex $\kappa(n)$. For all $(u_1,u_2) \in K^2$, the
Klein image of the only proper osculating tangent at $P(u_1,u_2)$ is the point
with coordinates
\begin{equation}\label{eq:klein-schmiegtang3}
  (1,0,u_2,-u_2,u_1^3,u_2^2)^\T.
\end{equation}
The Klein image of the pencil $\cL[Z,\omega]$ is the line spanned by
$\kappa(n)$ and $K\bw_\infty=\kappa(g_\infty)$; see (\ref{def:w.inf}). Now a
direct verification shows $(\cO\cup\cL[Z,\omega])\subset\cN$.

We read off from the penultimate coordinate in (\ref{eq:klein-schmiegtang3})
that $\cN=\cO\cup\cL[Z,\omega]$ holds precisely when every element of $K$ has a
third root in $K$.
\end{proof}
Of course, when $K$ is finite with characteristic $3$ then each of its elements
has a third root in $K$.

The line $D^{\perp}$ has yet another natural interpretation: Formula
(\ref{eq:erzeugendenbild}) can be rewritten in the form
\begin{equation}\label{oscu-tangent2-eq}
    \kappa(g(s_0,s_1))=K(s_0^3\bv_0+s_0^2s_1\bv_1+s_0s_1^2\bv_2+s_1^3\bv_3)
\end{equation}
with linearly independent $\bv_i \in K^{6 \times 1}$. By the above, we obtain a
twisted cubic as $(s_0,s_1) \ne (0,0)$ varies in $K^2$. Due to $\Char K=3$ all
osculating planes of this cubic belong to the pencil of planes (see
\cite[Theorem~21.1.2]{hirs-85}) with axis $K\bv_1 + K\bv_2 = D^{\perp}$.


\noindent Hans Havlicek, Institut f{\"u}r Diskrete Mathematik und Geometrie,
Technische Universit{\"a}t Wien,
Wiedner Hauptstra{\ss}e 8--10/104, A-1040 Wien, Austria.\\
Email: havlicek@geometrie.tuwien.ac.at

\par~\par

\noindent Rolf Riesinger, Patrizigasse 7/14, A-1210 Wien, Austria.\\
Email: rolf.riesinger@chello.at

\end{document}